\documentclass{amsart}
\usepackage{comment}
\usepackage{tgtermes}
\usepackage{newtxmath}

\usepackage{amsmath}
\usepackage{mathtools}
\usepackage[usenames,dvipsnames]{color}
\usepackage[normalem]{ulem}
\usepackage{url}
\usepackage{graphicx}
\usepackage{float, caption}
\usepackage{enumerate}
\usepackage{hyperref}  
\usepackage{thmtools}
\usepackage{thm-restate}
\usepackage{xcolor} 
\definecolor{royalpurple}{RGB}{120, 81, 169} 

\hypersetup{%
  bookmarksnumbered=true,%
  colorlinks=true,%
  linkcolor=royalpurple,
  citecolor=blue,
  filecolor=blue,%
  menucolor=blue,%
  urlcolor=blue
}

\def\makeautorefname#1#2{\expandafter\def\csname#1autorefname\endcsname{#2}}
\def\equationautorefname~#1ll{(#1)\null}
\makeautorefname{footnote}{footnote}%
\makeautorefname{item}{item}%
\makeautorefname{figure}{Figure}%
\makeautorefname{table}{Table}%
\makeautorefname{part}{Part}%
\makeautorefname{appendix}{Appendix}%
\makeautorefname{chapter}{Chapter}%
\makeautorefname{section}{Section}%
\makeautorefname{subsection}{Section}%
\makeautorefname{subsubsection}{Section}%
\makeautorefname{theorem}{Theorem}%
\makeautorefname{thm}{Theorem}%
\makeautorefname{cor}{Corollary}%
\makeautorefname{lem}{Lemma}%
\makeautorefname{prop}{Proposition}%
\makeautorefname{pro}{Property}
\makeautorefname{conj}{Conjecture}%
\makeautorefname{defn}{Definition}%
\makeautorefname{notn}{Notation}
\makeautorefname{notns}{Notations}
\makeautorefname{rem}{Remark}%
\makeautorefname{quest}{Question}%
\makeautorefname{exmp}{Example}%
\makeautorefname{ax}{Axiom}%
\makeautorefname{claim}{Claim}%
\makeautorefname{ass}{Assumption}%
\makeautorefname{asss}{Assumptions}%
\makeautorefname{con}{Construction}%
\makeautorefname{prob}{Problem}%
\makeautorefname{warn}{Warning}%
\makeautorefname{obs}{Observation}%
\makeautorefname{conv}{Convention}%

\newtheorem{thm}{Theorem}
\newtheorem{cor}[thm]{Corollary}
\newtheorem{prop}[thm]{Proposition}
\newtheorem{lem}[thm]{Lemma}

\newtheorem*{thm*}{Theorem}
\newtheorem*{lem*}{Lemma}
\newtheorem*{cor*}{Corollary}
\newtheorem*{prop*}{Proposition}

\theoremstyle{definition}

\newtheorem{con}[thm]{Construction}

\newtheorem{quest}[thm]{Question}
\newtheorem{rem}[thm]{Remark}



\DeclareMathOperator{\ch}{Conv}

\DeclareMathOperator{\intr}{int}


\newcommand{\Z}{\mathbf{Z}}

\newcommand{\R}{\mathbf{R}}

\newcommand{\lb}{\langle}
\newcommand{\rb}{\rangle}

\title{Carousel theorems for compact sets and homothets}

\author{Yiming Song}
\thanks{Department of Mathematics, Duke University, Durham, NC 27708, USA. Email: \texttt{y.song@duke.edu}.}

\subjclass[2020]{52C05, 52C07}
\keywords{convex geometry, discrete geometry,  antimatroids, carousel}

\begin{document}
\setcounter{tocdepth}{1}

\begin{abstract}
  We prove that if $A_0$ and $A_1$ are compact sets contained in a convex $n$-gon with vertices $G_1, \dots, G_n$, and $2 \lceil \frac{n}{2} \rceil$ is strictly greater than the number of common supporting lines of $A_0$ and $A_1$, then there exist $i\in \{0,1\}$ and $j\in \{1, \dots, n\}$ such that $A_i$ is contained in the convex hull of $A_{1-i}$ and $\{G_1,\dots,G_n\}\setminus\{G_j\}$. This generalizes and recovers results of Adaricheva--Bolat and Cz{\'e}dli--Kurusa concerning disks and homothetic sets. We construct examples to prove that the bound is sharp. We also construct a family of convex geometries not representable by positive homothets of any fixed planar convex body.
\end{abstract}
\maketitle

\section{Introduction} 
In 2016, K. Adaricheva and M. Bolat proved the following result:

\begin{thm*}[{\cite[Theorem 3.1]{ADARICHEVA2019726}}]
If $A_0$ and $A_1$ are closed disks in $\R^2$ and $G$ is a triangle with vertices $G_0, G_1, G_2$ such that $A_0, A_1 \subset G$, then there exist $i \in \{0,1\}$ and $j \in \{0,1,2\}$ such that $A_i \subset \ch(A_{1-i}, \cup_{k\neq j} \{G_k\})$.
\end{thm*}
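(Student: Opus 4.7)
The plan is to translate the six candidate inclusions into inequalities of support functions and then count arcs on the unit circle. For a compact convex set $A \subset \R^2$, let $h_A(\theta) = \max_{x \in A} \langle x, \theta\rangle$ for $\theta \in S^1$. By the separating hyperplane theorem, the inclusion
\[ A_i \subset \ch\bigl(A_{1-i}, \{g_0,g_1,g_2\}\setminus\{g_j\}\bigr) \]
is equivalent to $h_{A_i}(\theta) \leq \max\bigl(h_{A_{1-i}}(\theta),\, \max_{k\neq j}\langle g_k,\theta\rangle\bigr)$ for every $\theta \in S^1$. I will argue by contradiction and assume that all six such inclusions fail.

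Then for each $(i,j) \in \{0,1\}\times\{0,1,2\}$ I can pick a witness direction $\theta_{ij} \in S^1$ at which $h_{A_i}(\theta_{ij})$ strictly exceeds both $h_{A_{1-i}}(\theta_{ij})$ and $\langle g_k,\theta_{ij}\rangle$ for every $k\neq j$. Since $A_i \subset G$ forces $h_{A_i}(\theta) \leq \max_k \langle g_k,\theta\rangle$, the second condition pins the triangle's maximum to $k=j$, placing $\theta_{ij}$ in the open normal cone $C_j := \{\theta \in S^1 : \langle g_j,\theta\rangle > \langle g_k,\theta\rangle \text{ for all } k\neq j\}$. The three cones $C_0, C_1, C_2$ partition $S^1$ (up to three boundary points) into three open arcs.

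Now I use that $A_0, A_1$ are disks: if $A_i$ has center $c_i$ and radius $r_i$, then $h_{A_i}(\theta) = \langle c_i,\theta\rangle + r_i$, so
\[ d(\theta) := h_{A_0}(\theta) - h_{A_1}(\theta) = \langle c_0 - c_1,\theta\rangle + (r_0 - r_1) \]
is a single sinusoid plus a constant. If $d$ has constant sign then one disk contains the other and the conclusion is immediate. Otherwise $d$ has exactly two sign changes on $S^1$, so $\{d>0\}$ and $\{d<0\}$ are complementary open arcs. The three witnesses with $i=0$ force $\{d>0\}$ to meet each of $C_0, C_1, C_2$, and the three with $i=1$ demand the same of $\{d<0\}$.

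The final step is a topological impossibility: a single open arc of $S^1$ that meets all three members of a three-arc partition of $S^1$ must fully contain the middle one (in the cyclic traversal along the arc), and hence its complement misses that middle arc entirely. Therefore $\{d>0\}$ and $\{d<0\}$ cannot simultaneously meet all three $C_j$, a contradiction. The main subtlety, in my view, is making the reduction in the first two paragraphs airtight — specifically, checking that the separating hyperplane theorem pins each witness $\theta_{ij}$ into the correct \emph{open} normal cone, so that the arcs to be covered really are $C_0, C_1, C_2$. Once this is done, the argument condenses to a $3>2$ count between the normal cones of $G$ and the sign changes of $d$, and the remainder is routine.
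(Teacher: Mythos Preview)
Your proof is correct. The reduction via support functions is sound: the failure of each inclusion produces a witness direction $\theta_{ij}$ with $(-1)^i d(\theta_{ij})>0$ and, because $A_i\subset G$ forces $h_{A_i}(\theta_{ij})\le\max_k\langle g_k,\theta_{ij}\rangle$, the strict inequalities against $g_k$ for $k\neq j$ indeed place $\theta_{ij}$ in the \emph{open} normal cone $C_j$. Your final step is cleanest phrased as pigeonhole rather than ``contains the middle arc'': if each connected arc $C_j$ meets both $\{d>0\}$ and $\{d<0\}$, then by the intermediate value theorem each $C_j$ contains a zero of $d$; since the $C_j$ are pairwise disjoint this gives three zeros, contradicting the fact that the sinusoid $d$ has at most two.

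The paper takes a genuinely different route. It first proves a general theorem for arbitrary compact convex $A_0,A_1$ in a convex $n$-gon: if the number $s$ of common supporting lines satisfies $s<n$, the weak carousel rule holds. That proof is geometric, building ``sectors'' bounded by adjacent common supporting lines, ``slide-turning'' those lines along $\partial A_i$ until their endpoints hit vertices of $G$, and using a sweep/partition argument on $\partial G$ together with casework. The disk case then falls out as a corollary once one notes that two disks have at most two common supporting lines. Your argument is more analytic and more direct: it bypasses the sector and slide-turning machinery entirely. What is striking is that your method is not really specific to disks---the zeros of $d=h_{A_0}-h_{A_1}$ are exactly the outward normals of the common supporting lines, so the same pigeonhole (each of the $n$ disjoint normal cones $C_j$ must contain a zero of $d$) yields the paper's main theorem $s<n\Rightarrow$ weak carousel in a few lines. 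The paper's geometric apparatus buys a more visual picture and the explicit identification of \emph{which} sector witnesses the inclusion; your approach buys brevity and a transparent $n>s$ count.
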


\noindent Throughout the paper, let $\ch$ denote the convex hull operator and take $\subset$ to mean (not necessarily strict) inclusion. This theorem was later given a more succinct proof by G. Cz{\'e}dli in \cite{czedliCounterexamples} and extended by G. Cz{\'e}dli and {\'A}. Kurusa to the case where $A_0$ is obtained from $A_1$ via a positive homothety in \cite{Czedli2019}.

Our first result generalizes this theorem to the case where $A_0$ and $A_1$ are arbitrary compact sets in the plane. The relevant parameter turns out to be the number of common supporting lines of $A_0$ and $A_1$. A pair of distinct disks or distinct positively homothetic convex bodies has at most two, so the aforementioned theorems follow from:

\begin{restatable}{thm}{mainthm}\label{thm:carousel_points} Let $A_0$ and $A_1$ be compact subsets of $\R^2$ contained in $G$, a convex $n$-gon with vertices $G_1, \dots, G_n$. Let $s$ be the number of common supporting lines of $A_0$ and $A_1$. Suppose that
  \[s<
\begin{cases}
n, & \text{if $n$ is even},\\
n+1, & \text{if $n$ is odd}.
\end{cases}
\]
Then there exist $i\in\{0,1\}$ and $j \in \{1,\dots, n\}$ such that 
\[A_i \subset \ch(A_{1-i}, \cup_{k\neq j} \{G_k\}).\]
\end{restatable}
\noindent We construct explicit examples in Section \ref{subsection:bound_sharp} to show that this bound is sharp. For several natural families of sets, the number of common supporting lines can be explicitly bounded. For example, when $\partial\ch(A_0)$ and $\partial\ch(A_1)$ are algebraic curves (see Corollary \ref{cor:algebraic}). The proof of Theorem~\ref{thm:carousel_points}, which is convex-analytic and uses the machinery of support functions, also extends to the case where $A_0$ and $A_1$ have an infinite set of common supporting lines (see Corollary~\ref{cor:weak_carousel_inf_supplines}). This answers a question posed to the author by G. Cz{\'e}dli.

Now for terminology. When the conclusion of Theorem~\ref{thm:carousel_points} holds, we say that the ordered $(n+2)$-tuple $(A_0, A_1, G_1,\dots, G_n)$ satisfies the \emph{weak carousel rule}. The word ``carousel'' is used because the convex hulls considered in the theorem look somewhat like carousels. The term was introduced in \cite{Adaricheva2010}, which also contains figures of said carousel. The word ``weak'' indicates that the containment is only required to hold for a single $i\in \{0,1\}$. In this language, the theorem of Adaricheva--Bolat asserts that the weak carousel rule is satisfied by two disks contained in a triangle. 

The weak carousel rule, while purely geometric, has applications to combinatorics, in particular to the representation of \emph{convex geometries}, which are set systems equipped with a type of closure operator (see Section~\ref{section:convexgeometries} for key definitions). Roughly speaking, a \emph{representation} by a class $\mathcal{C}$ of convex bodies in $\R^d$ assigns an element of $\mathcal{C}$ to each element of a finite convex geometry so that its closure operator is realized by ordinary convex-hull containment. Adaricheva and Bolat show in \cite{ADARICHEVA2019726} that the weak carousel rule yields obstructions to representing convex geometries by the class of disks in the plane.

Representations of finite convex geometries by configurations of convex bodies in Euclidean space have been studied extensively. A seminal result of Kashiwabara, Nakamura, and Okamoto \cite{KASHIWABARA2005129} shows that every finite convex geometry embeds into a point configuration in sufficiently high dimension, while Cz{\'e}dli \cite{CZEDLI201461} shows that planar disks represent a broader class of convex geometries than planar point configurations. Richter and Rogers \cite{RICHTER20171059} represent convex geometries of convex dimension $n$ by convex $n$-gons in $\R^2$ (see \cite{edelmanjamison} for definitions), and Cz{\'e}dli and Kincses \cite[Theorem 1.8]{CzedliKincses2017} extend such results to differentiable convex curves arbitrarily close to circles. Kincses \cite{Kincses2017} further shows that no uniform bound on the number of pairwise common supporting lines suffices to represent all finite convex geometries. 

Our next result complements the result of Kincses and extends that of Adaricheva--Bolat. We show that the weak carousel rule obstructs the representation of a certain family of convex geometries by the class $\mathcal{H}(K)$ of positive homothets of a given convex \emph{body} (compact with nonempty interior) $K\subset \R^2$. To obtain such an obstruction, we must show that the weak carousel rule holds when not only $A_0$ and $A_1$, but also $G_1,\dots, G_n$, belong to $\mathcal{H}(K)$:
\begin{thm}\label{thm:nonrep_exmp}
  Let $n\geq 3$. Fix a convex body $K\subset \R^2$. Let $\mathcal{H}(K) = \{t + \lambda K: t\in \R^2, \lambda \in \R_{>0}\}$. For any $A_0, A_1, G_1, \dots, G_n \in \mathcal{H}(K)$ with $A_0, A_1 \subset \ch(\cup_j G_j)$, the weak carousel rule holds, i.e.  there exist $i\in\{0,1\}$ and $j \in \{1,\dots, n\}$ such that 
\[A_i \subset \ch(A_{1-i}, \cup_{k\neq j} G_k).\]
\end{thm}
Note that while two positively homothetic bodies have at most two common supporting lines, Theorem~\ref{thm:carousel_points} is not enough, since the $G_j$ are points in its setup. The proof of Theorem~\ref{thm:nonrep_exmp} relies again on properties of support functions; they behave nicely under positive homothety. When $K$ is a disk, we recover \cite[Theorem 5.2]{ADARICHEVA2019726}, which is the weak carousel rule for two disks contained in the convex hull of three other disks. We then generalize a construction from \cite{ADARICHEVA2019726} to obtain convex geometries $(U_n, \tau_n)$ for $n\geq 3$ that are not representable by positive homothets of a fixed convex body (Corollary~\ref{cor:nonrep}).

\subsection*{Organization} In Section~\ref{section:prelims}, we recall the necessary background in support functions and convex analysis. In Section~\ref{section:weakcarousel}, we prove Theorem~\ref{thm:carousel_points}, construct explicit examples to show sharpness, and list several corollaries. In Section~\ref{section:convexgeometries}, we prove Theorem~\ref{thm:nonrep_exmp} and construct a family of convex geometries not representable by positive homothets of a fixed convex body. In Section~\ref{section:questions} we conclude with a few open questions.

\section{Common supporting hyperplanes and support functions}\label{section:prelims}

Let $A \subset \R^d$ be a compact set and let $l$ be a hyperplane in $\R^d$. We say that $l$ \textit{supports} $A$, or is a \textit{supporting hyperplane} of $A$, if $l\cap A$ is nonempty and $A$ lies in one of the closed halfspaces bounded by $l$. Given a collection $\mathcal{A} = \{A_i\}$ of compact sets in $\R^d$, we say that $l$ is a \textit{common supporting hyperplane} of $\mathcal{A}$ if $l$ is a supporting hyperplane for each $A_i$ and all $A_i$ lie in the same closed halfspace bounded by $l$.

Given a nonempty compact\footnote{For every nonempty compact set $A$, the support function is finite and continuous; for an unbounded closed convex set it may take the value $+\infty$.} subset $A \subset \R^d$, it is useful to consider its \textit{support function} $h_A: \R^d \to \R$, given by $h_A(\eta) := \sup_{a \in A} \lb \eta, a\rb$,
where $\lb -, -\rb$ denotes the standard inner product. The support function $h_A$ is finite, continuous, and positively homogeneous, so we can usually just consider its restriction to $S^{d-1}\subset \R^d$. For $\eta \in S^{d-1}$ and nonempty compact sets $A_0, A_1$, call $\eta$ a \emph{common supporting direction} of $A_0, A_1$ if $h_{A_0}(\eta) = h_{A_1}(\eta)$. 

We list several useful properties of support functions in the following lemma. These properties are standard; see, for example,  \cite[\S1.7.1]{Schneider_2013}, which uses an equivalent analytic definition for supporting hyperplanes.

\begin{lem}\label{lem:support_function_properties} Let $A, B\subset \R^d$ be nonempty, convex, compact sets, and $G_1,\dots, G_n\subset \R^d$ be nonempty and compact.
\begin{enumerate}[(a)]
  \item For a fixed unit vector $\eta \in S^{d-1}$, 
  \[l_\eta :=  \{x \in \R^d: \lb x, \eta\rb  = h_{G_1}(\eta)\}\]
  is a supporting hyperplane of $G_1$. Every supporting hyperplane of $G_1$ takes this form.
  \item For a fixed unit vector $\eta \in S^{d-1}$ and constant $c \in \R$, the hyperplane $\{x \in \R^d: \lb \eta, x\rb =  c\}$ supports $G_1, \dots, G_n$ in the half-space $\{\lb \eta, x\rb \leq c\}$ if and only if $h_{G_i}(\eta) = c$ for all $i \in \{1, \dots, n\}$. 
  \item $A\subset B$ if and only if $h_A(\eta)\leq h_B(\eta)$ for all $\eta \in S^{d-1}$.
  \item $h_{\ch(\cup_i G_i)}(\eta) = \max_i h_{G_i}(\eta)$.
\end{enumerate}
\end{lem}

\begin{rem}\label{rem:convex}
Notice that for compact sets $U, V_1, \dots, V_n \subset \R^d$, we have, by the idempotency and monotonicity of the convex hull operator, that $U\subset \ch(\cup_i V_i)$ if and only if $\ch(U)\subset \ch(\cup_i\ch V_i)$. This is because $\ch(\cup_i \ch(V_i)) = \ch(\cup_i V_i)$. Moreover, if $U$ is nonempty then $h_U = h_{\ch(U)}$, and a hyperplane supports $U$ if and only if it supports $\ch(U)$. Finally, homotheties and translations commute with taking convex hulls. Therefore, in proving several results in this paper, we may replace the compact sets involved by their convex hulls.
\end{rem}

\begin{lem}\label{lem:carousel_general}
Let $A_0, A_1$ be compact subsets of $\R^d$ contained in $G = \ch(G_1, \dots, G_n)$ where each $G_k$ is a nonempty compact subset of $\R^d$. For each $j \in \{1, \dots, n\}$ define $\Lambda_j := \{\eta \in S^{d-1}: h_G(\eta) = h_{G_j}(\eta)\}$. Suppose either 
\begin{enumerate}
  \item $A_0 = \varnothing$ or $A_1 = \varnothing$, or
  \item there exists $j$ such that $h_{A_0}-h_{A_1}$ has constant sign (nonpositive or nonnegative) on $\Lambda_j$.
\end{enumerate}
Then $(A_0, A_1, G_1, \dots, G_n)$ satisfies the weak carousel rule, i.e. there exist $i \in \{0,1\}$ and $j\in \{1, \dots, n \}$ such that $A_i \subset \ch(A_{1-i}, \cup_{k\neq j} G_k)$.
\begin{proof}
By Remark~\ref{rem:convex} we may replace each $A_i$ and $G_j$ by its convex hull. The result is trivial if either $A_0$ or $A_1$ is empty, so suppose otherwise. Let $j$ be as in the hypothesis of the lemma. Then, after swapping $A_0$ and $A_1$ as necessary, $h_{A_0}(\eta) \leq h_{A_1}(\eta)$ for $\eta \in \Lambda_j$. It follows that 
  \[
  h_{A_0}(\eta) \leq h_{A_1}(\eta) \leq \max(h_{A_1}(\eta), \max_{k\neq j}h_{G_k}(\eta)) = h_{\ch(A_1, \cup_{k\neq j}G_k)}(\eta)
  \]
  for $\eta \in \Lambda_j$. Here we have used Lemma~\ref{lem:support_function_properties}(c,d). 
  
  Now suppose $\eta \notin \Lambda_j$. By Lemma~\ref{lem:support_function_properties}(d), $h_{G}(\eta)= \max_k h_{G_k}(\eta)$, so there exists $k\neq j$ such that $h_{G}(\eta)= \max_k h_{G_k}(\eta)$. Since $A_0\subset G$, we obtain 
  \[h_{A_0}(\eta) \leq h_G(\eta) = h_{G_k}(\eta) \leq \max(h_{A_1}(\eta), \max_{m\neq j}h_{G_m}(\eta)) = h_{\ch(A_1, \cup_{m\neq j}G_m)}(\eta).\]
  Hence $h_{A_0}(\eta) \leq h_{\ch(A_1,\cup_{k\neq j}G_k)}(\eta)$ for all $\eta \in S^{d-1}$, so by Lemma~\ref{lem:support_function_properties}(c), we conclude that $A_0 \subset \ch(A_1, \cup_{k\neq j} G_k)$.
\end{proof}
\end{lem}

\section{The weak carousel rule}\label{section:weakcarousel}
In this section, we prove Theorem~\ref{thm:carousel_points}, construct examples to demonstrate its bound is sharp, and list a few of its corollaries. 
\subsection{Proof of Theorem~\ref{thm:carousel_points} }
\begin{proof}[Proof of Theorem~\ref{thm:carousel_points}]
  In this case the $G_j$ are just points. Again, by Remark~\ref{rem:convex} we may assume $A_0$ and $A_1$ are convex. We may also assume that both $A_0$ and $A_1$ are nonempty. By definition, the zeros of $h_{A_0}-h_{A_1}$ correspond to the common supporting directions of $A_0$ and $A_1$. First suppose $A_0\cup A_1$ is contained in a line. Let $I$ be the interval $\ch(A_0, A_1)$. If both ends of the interval belong to the same set $A_0$ or $A_1$, the result is immediate. Otherwise, suppose $a \in A_0$ and $b \in A_1$ are the two endpoints. Since the collection $\ch(b, \cup_{k\neq j}G_k)$ for $j \in \{1, \dots, n\}$ covers $G$, we get 
  \[\ch(A_0, A_1) = \ch(a,b) \subset \ch(b, \cup_{k\neq j}G_k)\subset \ch(A_1, \cup_{k\neq j}G_k)\]
  for some $j$ as desired.
  
  If $A_0 \cup A_1$ is not contained in a line, then each common supporting line corresponds to exactly one common supporting direction, hence $h:= h_{A_0}-h_{A_1}$ has $s$ zeros. It now suffices by Lemma~\ref{lem:carousel_general} to find an index $j \in \{1, \dots, n\}$ such that $h$ has constant sign on $\Lambda_j$. Here $\Lambda_j = \{\eta \in S^1: h_G(\eta) = h_{G_j}(\eta)\}$ is the normal cone of $G$ at the vertex $G_j$, intersected with $S^1$. These cover $S^1$ and have pairwise disjoint interiors.

  To find the index $j$, we proceed based on the parity of $n$. First suppose $n$ is even. The assumption is that $s<n$, so by the pigeonhole principle, there exists some $\Lambda_j$ on which $h$ has constant sign. Now suppose $n$ is odd and $s< n+1$. On $S^1$, the continuous function $h$ has to change sign an even number of times, so it can change sign at most $n-1$ times. Then by the pigeonhole principle again, there is some $\Lambda_j$ on which $h$ does not change sign. We conclude via Lemma~\ref{lem:carousel_general}.
\end{proof}

We note that in the proof of Theorem~\ref{thm:carousel_points} above, one only relies on the fact that if $A_0$ and $A_1$ are not collinear, then having $s$ common supporting lines implies that the function $h:= h_{A_0}-h_{A_1}$ changes sign at most $s$ times. So the proof proceeds identically if we suppose that $h^{-1}(0)$ has $\tilde{s}$ connected components, since this also implies that $h$ changes sign at most $\tilde{s}$ times:
\begin{cor}\label{cor:weak_carousel_inf_supplines}
  Let $A_0$, $A_1$, $G$, and $n$ be as in Theorem~\ref{thm:carousel_points}. Suppose $A_0$ and $A_1$ are nonempty, and let $\tilde{s}$ be the number of connected components of the zero set of $h_{A_0}-h_{A_1}$. Suppose 
  \[\tilde{s}<
    \begin{cases}
    n, & \text{if $n$ is even},\\
    n+1, & \text{if $n$ is odd}.
    \end{cases}
  \]
  Then $(A_0, A_1, G_1,\dots, G_n)$ satisfies the weak carousel rule. \hfill \qed
\end{cor}

\subsection{The bound is sharp}\label{subsection:bound_sharp}
Given even $n\geq 4$ (resp. odd $n\geq 3$), we construct a convex $n$-gon $G$ and two convex polygons $A_0, A_1$ contained in $G$, sharing $n$ (resp. $n+1$) common supporting lines such that the configuration fails to satisfy the weak carousel rule, thus showing that the bounds given in Theorem~\ref{thm:carousel_points} are sharp. See Figure~\ref{fig:sharpness} for the constructions with $n=3,4,5,6$.
\begin{figure}[htbp]
  \centering
  \includegraphics[width=\linewidth]{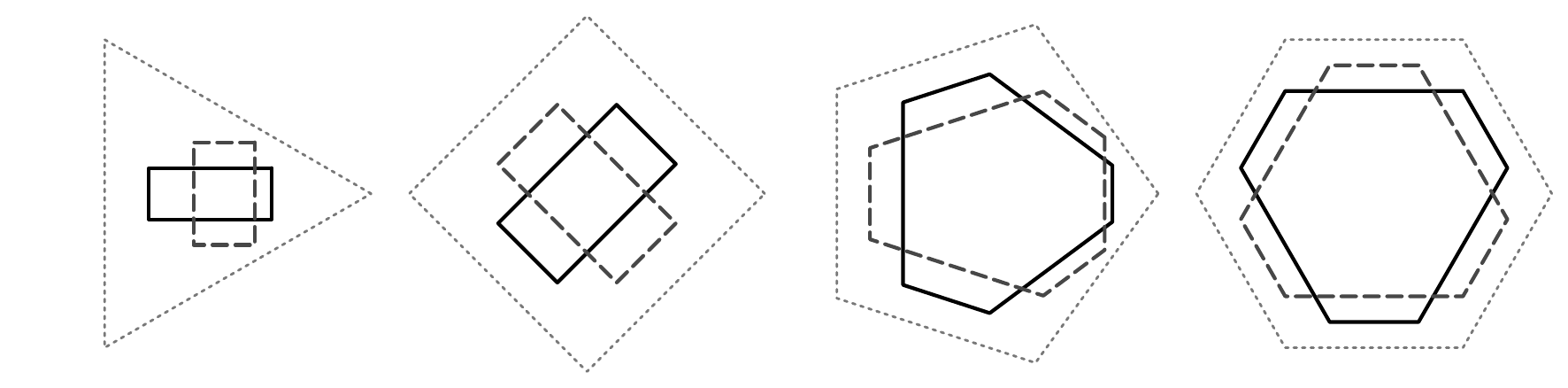}
  \caption{Examples for $n=3,4,5,6$ demonstrating that Theorem~\ref{thm:carousel_points} is sharp. $G$ is dotted. The dashed and solid polygons inside are $A_0$ and $A_1$.}
  \label{fig:sharpness}
\end{figure}

\subsubsection{Sharpness for even $n$} We identify $\R^2$ with the complex plane. Let $G$ be the $n$-gon whose vertices $G_k$ are the $n$th roots of unity, i.e. $G_k = \exp(2\pi ik/n)$ for $k \in \Z/n\Z$. Let $M_k = \frac{1}{2}(G_k+G_{k+1})$ be the midpoint of the edge connecting $G_k$ and $G_{k+1}$. Denote the line segments connecting $M_k$ to $M_{k+1}$ by $L_k$. Define $A_0$ and $A_1$ as follows. Trisect each edge $L_k$ to obtain the points 
\[x_k := \frac{2}{3}M_{k+1} + \frac{1}{3}M_k,\quad y_k:= \frac{1}{3}M_{k+1} + \frac{2}{3}M_k\]
and define 
\[a_{0,k} = \begin{cases}
x_k & \text{ $k$ even} \\
y_k & \text{ $k$ odd,} 
\end{cases}\quad a_{1,k} = \begin{cases}
x_k & \text{ $k$ odd} \\
y_k & \text{ $k$ even} 
\end{cases}\]
and let $A_0$ be the $n$-gon whose vertices are $a_{0,k}$, and $A_1$ the $n$-gon whose vertices are $a_{1,k}$. Then $L_k$ is the intersection of $G$ with one of the $n$ common supporting lines of $A_0$ and $A_1$.
\begin{prop}
The configuration $(A_0, A_1, G_0,\dots, G_{n-1})$ constructed above does not satisfy the weak carousel rule.
\begin{proof}
The configuration is invariant under rotations by $2\pi/n$ and swapping of $A_0$ and $A_1$, so it suffices to show that for the vertex $G_0 = (1,0) \in \R^2$, the inclusion 
\begin{equation}\label{eqn:desired_inclusion}
  A_1\subset \ch(A_0, \cup_{k\neq 0} G_k)
\end{equation}
fails to hold. Consider $L_{n-1}$, which is the segment of the rightmost common supporting line joining the midpoints of the edges of $G$ closest to $G_0$. It has constant $x$-coordinate $x^* = \frac{1+\cos(2\pi/n)}{2}$. Because $n\geq 4$, $\cos(2\pi/n)<1$, so $x^* >\cos(2\pi/n)$. It follows that $H:= \cup_{k\neq 0} G_k$ lies strictly to the left of $L_{n-1}$. Thus, any point in the convex hull $\ch(A_0\cup H)$ lying on $L_{n-1}$ is contained in the convex hull of $A_0$ alone. But by construction, $A_0 \cap L_{n-1}$ and $A_1 \cap L_{n-1}$ are distinct singletons. Hence the inclusion in (\ref{eqn:desired_inclusion}) fails.
\end{proof}

\end{prop}
\subsubsection{Sharpness for odd $n$}
We imitate the construction from the even case, adding an extra vertex for the interior polygons $A_0$ and $A_1$. Let $G = \ch(G_0, \dots, G_{n-1})$ be the regular $n$-gon as above, and let $M_k = \frac{1}{2}(G_k+G_{k+1})$ be the midpoints of the edges of $G$. Define the extra point $Q := \frac{M_{n-1}+M_0}{4}+\frac{1}{2}G_0$, and let $P := \ch (Q, M_0, \dots, M_{n-1})$. To construct $A_0$ and $A_1$, we repeat the same procedure. Set 
\[P_0 = Q, \quad P_k = M_{k-1} \text{ for } 1\leq k\leq n\]
where we take the indices modulo $n+1$. Define
\[x_k := \frac{2}{3}P_{k+1} + \frac{1}{3}P_k,\quad y_k:= \frac{1}{3}P_{k+1} + \frac{2}{3}P_k,
\]
\[a_{0,k} := \begin{cases}
x_k & \text{ $k$ even} \\
y_k & \text{ $k$ odd,} 
\end{cases}\quad a_{1,k} := \begin{cases}
x_k & \text{ $k$ odd} \\
y_k & \text{ $k$ even.} 
\end{cases}\]
Let $A_0$ be the $(n+1)$-gon whose vertices are $a_{0,k}$, and $A_1$ the ($n+1)$-gon whose vertices are $a_{1,k}$. To see that this construction does not satisfy the weak carousel rule, notice that every vertex of $A_0$ and $A_1$ away from the split edges $[M_{n-1}, Q]$ and $[Q, M_0]$ is unchanged from the construction in the even case. So it suffices to verify that if we remove $G_0$, neither of the required inclusions holds. This is a direct coordinate computation. 

\subsection{Corollaries} We now list a few corollaries of Theorem~\ref{thm:carousel_points}. First, {\cite[Theorem 3.1]{ADARICHEVA2019726} and \cite[Theorem 1.1]{czedliCounterexamples}}, concerning two disks contained in a triangle, follow immediately since two distinct disks have at most two common supporting lines. Theorem~\ref{thm:carousel_points} also shows that this result holds if we replace the triangle with any convex $n$-gon for $n\geq 3$.

Next, we consider the case of algebraic curves. The question of whether there exists an extension of the weak carousel rule to ellipses was posed to the author by K. Adaricheva. Theorem~\ref{thm:carousel_points} provides an answer:
\begin{cor}\label{cor:algebraic}
  Let $A_0, A_1$ be compact subsets of the plane contained in a convex $n$-gon $G$. For $i \in \{0,1\}$, suppose $\partial\ch(A_i)$ is a smooth compact oval of an irreducible projective plane curve $C_i$ of degree $d_i$ defined over $\R$. Suppose $C_0 \neq C_1$. If $d_0(d_0-1)d_1(d_1-1) < 2{\lceil \frac{n}{2}\rceil}$,
  then this configuration satisfies the weak carousel rule. In particular, two ellipses contained in a pentagon satisfy the weak carousel rule.
  \begin{proof}
    If $\ch(A_0)=\ch(A_1)$ then the result is immediate, so suppose otherwise. Then every common supporting line of $\ch(A_0)$ and $\ch(A_1)$ is tangent to $C_0$ and $C_1$, hence determines a real point on the intersection of the dual curves $C_0^*$ and $C_1^*$. These dual curves $C_i^*$ are distinct and have degrees at most $d_i(d_i-1)$. Hence, by B{\'e}zout's theorem, there are at most $d_0(d_0-1)d_1(d_1-1)$ common supporting lines. Apply Theorem~\ref{thm:carousel_points}. In the case of ellipses, take $d_0 = d_1 = 2$.
  \end{proof}
\end{cor}

Finally, we apply Theorem~\ref{thm:carousel_points} to prove \cite[Theorem 1.1]{Czedli2019}, which concerns the case where $A_0$ is obtained from $A_1$ by a positive homothety or translation. Recall that for a point $p \in \R^2$ and a constant $\lambda \in \R\setminus \{0\}$, the \emph{homothety} with center $p$ and ratio $\lambda$ is the map $\chi_{p,\lambda}: \R^2\to \R^2$ given by $x \mapsto p+\lambda(x-p)$. If $\lambda >0$, we say $\chi_{p,\lambda}$ is a \emph{positive} homothety. Under composition, the maps ($x \mapsto \lambda x+t$ for $\lambda \in \R_{>0}, t\in \R^2$) form a group containing all positive homotheties and translations.
\begin{cor}\label{cor:homothety}
Assume $A_1\subset \R^2$ is nonempty and compact. If $A_0$ is obtained via a positive homothety $\chi_{p,\lambda}$ or translation $x\mapsto x+t$ of $A_1$, then the weak carousel rule is satisfied for any convex polygon $G$ containing $A_0\cup A_1$.
\begin{proof}
  By Remark~\ref{rem:convex}, replace $A_0$ and $A_1$ with their convex hulls. First, we treat the case where $A_0$ and $A_1$ are homothetic. We compute
  \[h_{A_0}(\eta) = h_{\chi_{p,\lambda}(A_1)}(\eta) =  \lb p-\lambda p, \eta\rb + \lambda \sup_{x \in A_1}\lb x, \eta \rb =  (1-\lambda)\lb p, \eta\rb+ \lambda h_{A_1}(\eta)\]
  which yields 
  \[h(\eta) := h_{A_0}(\eta)-h_{A_1}(\eta) = (\lambda -1)(h_{A_1}(\eta) - \lb p,\eta\rb ) = (\lambda-1)h_{A_1 - p}(\eta).\]
  We may assume $\lambda \neq 1$, for otherwise $A_0 = A_1$. It remains to count the zeros of $h$, which are the zeros of $h_{A_1 - p}$. Consider the following cases:
  \begin{enumerate}
    \item $p \in A_1$. If $\lambda<1$, then for any $a \in A_1$, the point $(1-\lambda)p+\lambda a$ lies in $A_1$, hence $A_0\subset A_1$ and the weak carousel rule holds trivially. If $\lambda>1$, then the same argument in reverse shows that $A_1\subset A_0$.
    \item $p \notin A_1$. Then the origin lies outside of $A_1-p$. A zero of $h_{A_1-p}$ then corresponds to a supporting direction of $A_1-p$ whose corresponding common supporting line passes through the origin, of which there are at most two.
  \end{enumerate}
  Since $h$ has at most two zeros, $A_0$ and $A_1$ have at most two common supporting lines, so we can apply Theorem~\ref{thm:carousel_points} to complete the proof in this case.

  If $A_0 = A_1+t$, then $h(\eta) = \sup_{x \in A_1}\lb x+t, \eta\rb - \sup_{x \in A_1}\lb x, \eta\rb  = \lb t, \eta \rb$. Unless $t=0$ (in which case $A_0 = A_1$), its zeros are the two unit vectors perpendicular to $t$. Again, we conclude via Theorem~\ref{thm:carousel_points}.
\end{proof}
\end{cor}

\section{An application to convex geometries}\label{section:convexgeometries}
In this section we use the weak carousel rule to prove a non-representability result for convex geometries. Recall that a \textit{convex geometry} (not to be confused with the field of study) is a pair $(U,\tau)$ where $U$ is a set and $\tau: 2^U\to 2^U$ is a closure operator satisfying $\tau(\varnothing) = \varnothing$ and the so-called \emph{anti-exchange} axiom:
  \begin{quote}
    \centering
\emph{  For all $x,y \in U$, $x\neq y$, and $A\subset U$ with $A = \tau(A)$, \\
  if $x \in \tau(A\cup \{y\})$ and $x,y \notin A$, then $y \notin \tau(A\cup \{x\})$.}
\end{quote} 
\noindent The classic example is obtained by taking $U$ to be a subset of $\R^d$ and $\tau$ to be $\ch(-)\cap U$.

\subsection{Representation of convex geometries}
A \emph{representation} result asserts an isomorphism between a convex geometry and a lattice of convex subsets of Euclidean space. Precisely, given a convex geometry $(U,\tau)$ and a class $\mathcal{C}$ of convex sets in $\R^d$, we say $(U,\tau)$ is \emph{representable by $\mathcal{C}$} if there exists an injection $\phi: U\to \mathcal{C}$ such that for any subset $A\subset U$ and $u \in U$,
\[u \in \tau(A) \text{\emph{ if and only if } } \phi(u) \subset \ch(\cup_{a \in A}\phi(a)).\]
That is, the closure operator $\tau$ corresponds exactly to the convex hull operator. For a formal exposition of convex geometries, see \cite{edelmanjamison}, which unifies various definitions of convex geometries, or Chapter 5 of \cite{AdNa16} for a comprehensive modern survey. 

In \cite[\S 6]{ADARICHEVA2019726}, the authors use the weak carousel rule to give an explicit example of a convex geometry not representable by disks. Their construction is the $n=3$ case of the following:
\begin{con}
Fix $n\geq 3$. Let $U_n = \{a_0, a_1, g_1, \dots, g_n\}$ and let $\tau_n$ be the closure operator defined on a subset $A\subset U_n$ via
\[
  \tau_n(A) = \begin{cases}
  U_n & \text{if }  \{g_1, \dots, g_n\}\subset A \\
  A & \text{otherwise.} 
  \end{cases}\]
\end{con}
It is straightforward to verify that this defines a convex geometry. 
\begin{lem}\label{lem:counterexample_generation}
Fix $n\geq 3$. Let $\mathcal{C}$ be a class of compact convex subsets of $\R^d$. Suppose the weak carousel rule holds for all $(A_0, A_1, G_1, \dots, G_n) \in \mathcal{C}^{n+2}$ where $A_0\cup A_1 \subset G:=\ch(G_1, \dots, G_n)$. Then the convex geometry $(U_n,\tau_n)$ cannot be represented by $\mathcal{C}$.
\begin{proof}
  Suppose there exists a representation $\phi: U_n\to \mathcal{C}$. Let $A_i := \phi(a_i)$ and $G_j := \phi(g_j)$. Since $\tau_n(\{g_1, \dots, g_n\}) = U_n$, we know that $A_0, A_1 \subset \ch(G_1, \dots, G_n)$. By the weak carousel rule, this implies that there exist $i \in \{0,1\}$ and $j \in \{1, \dots, n\}$ such that
  \[A_i \subset \ch(A_{1-i}, \cup_{k\neq j}G_k),\]
  which, by definition of a representation, holds if and only if
  \[a_i \in \tau_n(\{a_{1-i}, g_1, \dots, \widehat{g_j}, \dots, g_n\}) = \{a_{1-i}, g_1, \dots, \widehat{g_j}, \dots, g_n\},\]
  a contradiction.
\end{proof}
\end{lem}
In particular, it is shown in \cite[Theorem 5.2]{ADARICHEVA2019726} that when $\mathcal{C}$ is the class of disks in $\R^2$ and $n=3$, the assumptions of Lemma~\ref{lem:counterexample_generation} hold. In general, to generate new non-representability results of this nature, it is sufficient to find classes $\mathcal{C}$ of convex shapes such that every tuple $(A_0, A_1, G_1, \dots, G_n) \in \mathcal{C}^{n+2}$  with $A_0, A_1\subset \ch(G_1, \dots, G_n)$ satisfies the weak carousel rule.

\subsection{Positive homothets of convex bodies} 
We now prove Theorem~\ref{thm:nonrep_exmp}, which implies that the class of positive homothets of a fixed convex body cannot represent the previously constructed family of convex geometries $\{(U_n,\tau_n)\}_{n\geq 3}$. For the remainder of this subsection, fix a convex body $K\subset \R^2$, and assume without loss of generality that $0 \in \intr(K)$. Let $\mathcal{H}(K) = \{t + \lambda K: t\in \R^2, \lambda \in \R_{>0}\}$ and let $B := \{\eta: h_K(\eta) \leq 1\}$.

First, notice that the support function of a given homothet $A = t_0+\lambda_0 K \in \mathcal{H}(K)$ is given by $h_A(\eta) = \lb t_0,\eta \rb + \lambda_0 h_K(\eta)$. Hence the support function is affine when restricted to $\partial B$. We can then extend $h_A|_{\partial B}$ to the plane by defining
\begin{equation}\label{eqn:def_l}
  l_A(z) := \lb t_0,z \rb  +\lambda_0.
\end{equation}
This yields for each $A \in \mathcal{H}(K)$ an affine map $l_A: \R^2\to \R$, which retains the information about inclusions between elements of $\mathcal{H}(K)$, just as the support function does:
\begin{lem}\label{lem:l_function_properties}
  Let $C, D \in \mathcal{H}(K)$. Then $C\subset D$ if and only if $l_C(z)\leq l_D(z)$ for all $z \in \partial B$. 
  \begin{proof}
    It suffices by Lemma~\ref{lem:support_function_properties}(c) to show that $l_C(z)\leq l_D(z)$ for all $z \in \partial B$ if and only if $h_C(\eta) \leq h_D(\eta)$ for all $\eta \in S^1$. For fixed $\eta \in S^1$, let $z := \eta/h_K(\eta)$. Here $h_K(\eta)$ is strictly positive since $0 \in \intr(K)$. Then 
    \[h_C(\eta) = h_C(zh_K(\eta)) = h_C(z)h_K(\eta) = l_C(z)h_K(\eta)\]
    since $h_K(z) = 1$. Hence $h_C(\eta)$ is a positive multiple of $l_C(z)$ and the result follows. Conversely, for fixed $z \in \partial B$, taking $\eta = z/|z|$ yields the same argument.
  \end{proof}
\end{lem}

It also follows from Lemma~\ref{lem:support_function_properties} that, for $C, C_1, \dots, C_n \in \mathcal{H}(K)$, $C\subset \ch(\cup_iC_i)$ if and only if $l_C(z)\leq \max_i l_{C_i}(z)$ for all $z \in \partial B$. Hence it suffices to work with these simpler affine functions $l_A$ on the domain $B$ or its boundary $\partial B$. 
\begin{proof}[Proof of Theorem~\ref{thm:nonrep_exmp}] We separate the proof into three steps.
\paragraph{\emph{Step one: various affine functions}} Let
\[a_i:= l_{A_i},\quad g_j = l_{G_j},\quad m = \frac{a_0+a_1}{2}, \quad d = \frac{a_0-a_1}{2}, \quad f_j =  g_j-m.\]
Suppose that $A_0\not\subset A_1$ and $A_1\not\subset A_0$, for otherwise the result is trivial. Then $d$ takes both positive and negative values on $B$, so the line $L:= \{d(z) = 0\}$ must intersect the interior of $B$; write this intersection $B\cap L$ as $[p,q]$. Since $d$ is affine, the points $p$ and $q$ divide $\partial B$ into two arcs 
\[\Gamma_+:= \{z \in \partial B: d(z)\geq 0\},\quad \Gamma_-:= \{z \in \partial B: d(z)\leq 0\},\]
which we can cover with the convex sets 
\[S_j:= \{z \in B: f_j(z)\geq |d(z)|\}, \quad j \in \{1, \dots, n\}. \]
Indeed, since $A_0, A_1\subset \ch(\cup_jG_j)$, we obtain 
\[\max_j f_j(z) = \max_j g_j(z)-m(z) \geq a_i(z) - m(z) = \begin{cases}
d(z) & i = 0 \\
-d(z) & i= 1.
\end{cases}\]

\paragraph{\emph{Step two: toward a contradiction}} Suppose that the weak carousel rule fails; that is, for every $j \in \{1, \dots, n\}$ we have 
\[A_0\not\subset \ch(A_1, \cup_{k\neq j}G_k)\text{ and } A_1\not\subset \ch(A_0, \cup_{k\neq j}G_k).\]
It follows from Lemma~\ref{lem:l_function_properties} that there exists $x_j \in \partial B$ such that 
\begin{equation}\label{eqn:useful_ineqs}
  a_0(x_j)>a_1(x_j), \quad a_0(x_j) >g_k(x_j)\text{ for $k\neq j$}, \quad a_0(x_j)\leq g_j(x_j).
\end{equation}
From these inequalities we obtain that $x_j$ is contained in the relative interior of the arc $\Gamma^+$, that $x_j \in S_j$, and for $k\neq j$ that $x_j \notin S_k$. Similarly, we can repeat this construction using the other non-containment to obtain a point $y_j \in \partial B$ such that $y_j$ is contained in the relative interior of the arc $\Gamma^-$, that $y_j \in S_j$, and that $y_j \notin S_k$ for $k\neq j$. Since $S_j$ is convex, the segment $[x_j, y_j]$ is contained in $S_j$. Because $x_j\in\Gamma^+$ and $y_j\in\Gamma^-$, the segment $[x_j, y_j]$ must intersect the segment $[p,q]$. We conclude that the intersection
\[S_j\cap [p,q] = \{z \in [p,q]: f_j(z)\geq |d(z)|\} = \{z \in [p,q]: f_j(z)\geq 0\} \]
is nonempty (here the second equality holds because $d=0$ on $[p,q]$). But a nonempty intersection of an interval with the halfplane $\{f_j(z)\geq 0\}$ must contain at least one endpoint, so $p\in S_j$ or $q \in S_j$. 
\vspace{5pt}

\paragraph{\emph{Step three: the contradiction}} Now consider for $j\in \{1, \dots, n\}$ the sets 
\[I_j: = \Gamma_+ \setminus S_j = \{z\in \Gamma_+: f_j(z)-d(z)<0\}.\]
Here $I_j$ must be an interval on $\partial B$, for otherwise $S_j$ contains neither $p$ nor $q$, contradicting the previous discussion. We then have $\cap_j I_j = \Gamma^+ \setminus \cup_j S_j = \varnothing$ by the first step. By Helly's theorem for intervals (see, for example, \cite[\S2.1]{ECKHOFF1993}), there exist distinct $\alpha, \beta \in \{1, \dots, n\}$ such that $I_\alpha\cap I_\beta = \varnothing$, i.e. $\Gamma^+ \subset S_\alpha \cup S_\beta$. Since $n\geq 3$, we can choose $\gamma \in \{1, \dots, n\}\setminus \{\alpha,\beta\}$. Then $x_\gamma \in S_\gamma$ and $x_\gamma \in S_\alpha\cup S_\beta$, a contradiction. This completes the proof.
\end{proof} 
Taking $K$ to be a disk, we immediately obtain \cite[Theorem 5.2]{ADARICHEVA2019726}, since all disks are positively homothetic. 
\begin{cor}\label{cor:nonrep}
  For every $n\geq 3$ and every fixed convex body $K\subset \R^2$, the convex geometry $(U_n,\tau_n)$ is not representable by $\mathcal{H}(K)$.
  \begin{proof}
  Combine Lemma~\ref{lem:counterexample_generation} and Theorem~\ref{thm:nonrep_exmp}.
  \end{proof}
\end{cor}
    
\section{Concluding remarks}\label{section:questions}
We end with a few questions. First, the assumption that all shapes are positive homothets of a fixed convex body prevents the weak carousel rule from yielding more general non-representability results. For example, see Figure~\ref{fig:ellipsecounterex} for a representation of $(U_3,\tau_3)$ by non-homothetic ellipses (see \cite[Figure 23]{Adaricheva2024} for another representation of the same convex geometry, called $CG4$ there, by ellipses).
\begin{figure}[htbp]
    \centering    \includegraphics[width=0.7\linewidth]{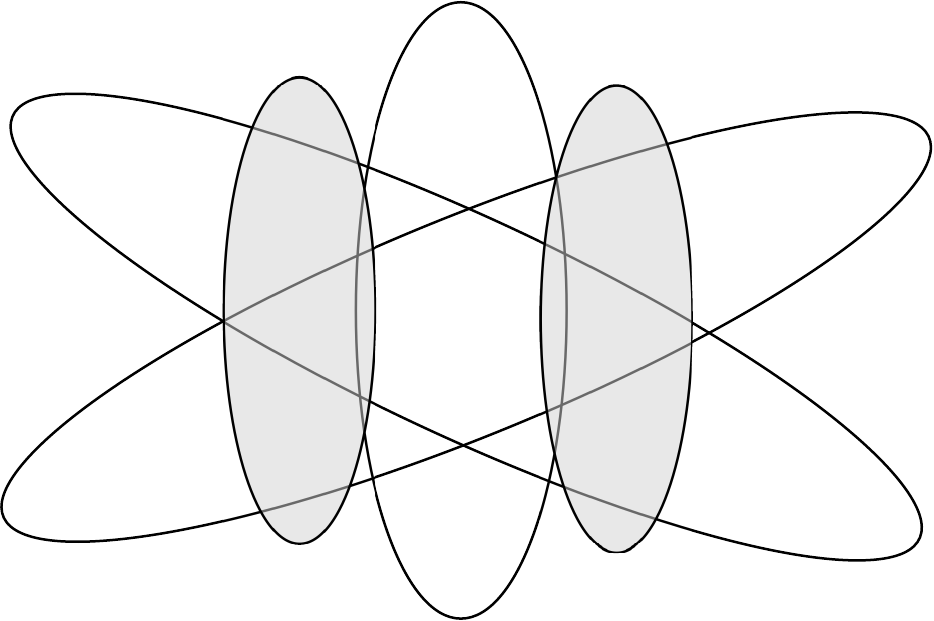}
    \caption{A representation of the convex geometry $(U_3, \tau_3)$ by ellipses. The ellipses $A_0, A_1$ are shaded; the ellipses $G_1, G_2, G_3$ are unshaded. The weak carousel rule fails for $(A_0, A_1, G_1, G_2, G_3)$.}
    \label{fig:ellipsecounterex}
\end{figure}
\begin{quest}
  Can the positive homothety assumption in Theorem~\ref{thm:nonrep_exmp} be removed or weakened to generate more examples of non-representable convex geometries?
\end{quest}
Next, in \cite[Theorem 1.1]{Czedli2017-qi}, Cz{\'e}dli shows that the weak carousel rule characterizes disks in $\R^2$. That is, a compact convex set $A_0 \subset \R^2$ is a disk if and only if for every $A_1$ isometric to $A_0$ and every triangle $G$ containing $A_0\cup A_1$, the weak carousel rule holds. One natural question in this vein, following Corollary~\ref{cor:algebraic}, is whether this property uniquely characterizes a given class of algebraic curves:
\begin{quest}
    Suppose for fixed $k$ and $n$ that the weak carousel rule holds for every arrangement of compact $A_0$ and $A_1$ contained in an $n$-gon $G$, where $\partial\ch(A_0)$ and $\partial\ch(A_1)$ are algebraic curves of degree at most $k$. Does this imply any relation between $k$ and $n$? Is there a converse to Corollary~\ref{cor:algebraic}?
\end{quest}
Lastly, we ask whether some reformulation of the weak carousel rule holds in higher dimensions. Though Lemma~\ref{lem:carousel_general} holds in all dimensions, the constant-sign hypothesis is difficult to verify in dimensions exceeding two. Extensions of Theorem~\ref{thm:carousel_points} using common supporting hyperplanes also seem difficult; as shown in \cite[Example 4.1]{czedliCounterexamples}, the simplest generalization, for an arrangement of two spheres in a tetrahedron in $\R^3$, fails. We ask:

\begin{quest} 
  Is there an analogous weak carousel rule in dimensions $d\geq 3$?
\end{quest}

\section*{Acknowledgments} Part of this research was conducted during the 2025 Baruch College Discrete Mathematics NSF Site REU. I would like to thank Kira Adaricheva for suggesting the project and providing invaluable feedback on numerous drafts, Adam Sheffer for mentorship and advice, and G{\'a}bor Cz{\'e}dli for suggesting interesting questions. I would also like to thank an anonymous referee for helpful comments that greatly improved the paper.

\bibliographystyle{alpha}
\bibliography{mybib}
    
\end{document}